\def\QQ{{\mathbb Q}}
\def\PP{{\mathbb P}}
\def\QQ{{\mathbb Q}}
\def\ZZ{{\mathbb Z}}
\def\0{{\mathbf 0}}
\def\1{{\mathbf 1}}
\def\Mcal{{\mathcal M}}
\def\Ocal{{\mathcal O}}
\def\Scal{{\mathcal S}}
\def\Vcal{{\mathcal V}}
\def\Wcal{{\mathcal W}}
\def\afrak{{\mathfrak a}}
\def\pfrak{{\mathfrak p}}
\def\Rfrak{{\mathfrak R}}
\def\Kbar{{\bar K}}
\def\Aut{\mathrm{Aut}}
\def\PGL{\mathrm{PGL}}
\def\Res{\mathrm{Res}}
\def\Hom{\mathrm{Hom}^n_d}
\def\min{\mathrm{min}}
\def\uf{\mathrm{uf}}
\def\uc{\mathrm{uc}}
\def\Norm{\mathrm{N}}
\def\Ratfd2{\mathrm{Rat}_{d,2}^{\uf}}
\def\Mfd2{\mathcal{M}_{d,2}^{\uf}}
\def\Rfd2{\mathcal{R}_{d,2}^{\uf}}
\def\Rcd2{\mathcal{R}_{d,2}^{\uc}}
\def\vp{\varphi}
\def\Pn{\PP^n}
\def\Homdn{\mathrm{Hom}_d^n}
\def\Ratdn{\mathrm{Rat}_d^n}
\def\Mdn{\mathcal{M}^n_d}
\def\Tw{\mathrm{Twist}}
\theoremstyle{plain}
\newtheorem{thm}{Theorem}
\newtheorem{prop}[thm]{Proposition}
\newtheorem{lem}[thm]{Lemma}
\theoremstyle{definition}
\newtheorem{dfn}[thm]{Definition}
\newtheorem{rem}{Remark}
\begin{document}

\title[Bounded Height and Resultant]{Endomorphisms of Bounded Height and Resultant}

\author{Brian Stout}
\author{Adam Towsley}

\address{Brian Stout; Department of Mathematics; United States Naval Academy;  Annapolis, MD 21401 U.S.A.}
\email{stout@usna.edu}

\address{Adam Towsley; Department of Mathematics; The City University of New York Graduate Center; New York, NY 10016 U.S.A.}
\email{atowsley@gc.cuny.edu}

\thanks{{\em Date of last revision:} May 8, 2014}
\subjclass[2010]{Primary: 37P45; Secondary: 14D22}
\keywords{Arithmetic dynamics, bounded height, bounded resultant}

\begin{abstract}
Let $K$ be an algebraic number field and $B\geq 1$.  For an endomorphism $\vp:\PP^n\rightarrow\PP^n$ defined over $K$ of degree $d$ let $\Rfrak_\vp\subset\Ocal_K$ denote its minimal resultant ideal.  For a fixed height function $h_{\Mdn}$ on the moduli space of dynamical systems this paper shows that all such morphisms $\vp$ of bounded resultant $\Norm_{K/\QQ}(\Rfrak_\vp)\leq B$ and bounded height $h_{\Mdn}(\langle\vp\rangle)\leq B$ are contained in finitely many $\PGL_{n+1}(K)$-equivalence classes. This answers a question of Silverman in the affirmative.
\end{abstract}
\maketitle


\section{Introduction}\label{Introduction}
In \cite{SilvermanADS} Silverman asked the following question: 
given a rational map $\vp: \PP^1 \rightarrow \PP^1$ of degree $d$ defined over $K$ are there only finitely many $K$-isomorphism classes of $\vp$ of bounded height and bounded resultant? 
We prove the following $n$-dimensional generalization of Silverman's question:
\begin{thm}\label{MainTheorem}
 Let $K$ be a number field and let $\vp: \Pn \rightarrow \Pn$ be a morphism of degree $d\geq 2$ defined over $K$. Let $\Rfrak_\vp$ denote the minimal resultant ideal of $\vp$. Fix an embedding of 
 $\Mdn$ into projective space and denote the associated height function by $h$. If $B \geq 1$ then the set
 $$\Gamma_{K,B} = \left\lbrace \vp \in \Homdn \left( K \right) \ : \ \Norm_{K/\QQ}( \Rfrak_\vp) \leq B \text{ and } h \left( \left< \vp \right> \right) \leq B \right\rbrace$$
 is contained in only finitely many $\PGL_{n+1} \left( K \right)$-conjugacy classes of morphisms.
\end{thm}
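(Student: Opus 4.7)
The plan is to split the argument into a moduli-theoretic Northcott step followed by a twist-counting step that converts the resultant bound into a ramification bound on an associated Galois cocycle. Because $\Mdn$ is embedded in projective space and $h$ is the resulting projective height, Northcott's theorem implies that the set
\[
S := \{\, \xi \in \Mdn(K) : h(\xi) \leq B \,\}
\]
is finite. The conjugacy-class map $\pi \colon \Homdn(K) \to \Mdn(K)$, $\vp \mapsto \langle \vp \rangle$, sends $\Gamma_{K,B}$ into $S$, so it suffices to show that for each $\xi \in S$ the fiber $\pi^{-1}(\xi) \cap \Gamma_{K,B}$ meets only finitely many $\PGL_{n+1}(K)$-orbits.

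Fix $\xi \in S$ and a representative $\vp \in \pi^{-1}(\xi) \cap \Gamma_{K,B}$. Any other $\psi$ over the same moduli point is $\PGL_{n+1}(\Kbar)$-conjugate to $\vp$, say $\psi = f\vp f^{-1}$ with $f \in \PGL_{n+1}(\Kbar)$, and the assignment $g \mapsto f^{-1} g(f)$ is a $1$-cocycle valued in the automorphism group scheme $\Aut(\vp) \subset \PGL_{n+1}$. Since $d \geq 2$, $\Aut(\vp)$ is a finite \'etale $K$-group scheme, and standard descent theory identifies the $\PGL_{n+1}(K)$-orbits in $\pi^{-1}(\xi) \cap \Homdn(K)$ with a subset of the pointed set $H^1(\Gal(\Kbar/K), \Aut(\vp))$. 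Thus the remaining task is to show that only finitely many cohomology classes arise from twists satisfying $\Norm_{K/\QQ}(\Rfrak_\psi) \leq B$.

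The decisive step is local: at any non-archimedean prime $\pfrak$ where $\vp$ has good reduction and $\pfrak \nmid \Rfrak_\psi$, the cocycle associated to $\psi$ should be trivial on the inertia subgroup at $\pfrak$. Granting this, the ramified set of any such cocycle lies inside the finite set
\[
T_\vp := \{\text{archimedean places}\} \cup \{\pfrak : \pfrak \mid \Rfrak_\vp\} \cup \{\pfrak : \Norm_{K/\QQ}(\pfrak) \leq B\},
\]
the last clause being needed because $\pfrak \mid \Rfrak_\psi$ together with $\Norm_{K/\QQ}(\Rfrak_\psi) \leq B$ forces $\Norm_{K/\QQ}(\pfrak) \leq B$. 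Since $\Aut(\vp)$ is finite, every such cocycle factors through $\Gal(L/K)$ for a finite extension $L/K$ unramified outside $T_\vp$ and of degree bounded in terms of $|\Aut(\vp)(\Kbar)|$. Hermite--Minkowski produces only finitely many such $L$, and for each one the pointed set $H^1(\Gal(L/K), \Aut(\vp)(L))$ is finite, yielding the desired conclusion.

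The main obstacle is the local unramifiedness assertion, a dynamical analogue of N\'eron--Ogg--Shafarevich. The approach is to use the minimal-resultant hypothesis to obtain integral models of $\vp$ and $\psi$ over $\Ocal_{K_\pfrak}$ that remain morphisms on the special fiber, and then to lift a conjugation between the reductions $\bar\vp$ and $\bar\psi$ over $\bar k_\pfrak$ to a conjugation in $\PGL_{n+1}(K_\pfrak^{ur})$. Once $f$ is adjusted within its $\Aut(\vp)(\Kbar)$-coset to lie in $\PGL_{n+1}(K_\pfrak^{ur})$, the cocycle $g \mapsto f^{-1} g(f)$ is automatically trivial on inertia. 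Verifying the existence of such a lift uses the \'etaleness of $\Aut(\vp)$ together with a Hensel-type argument applied to the integral models, and this is precisely where the bounded-resultant hypothesis on $\psi$ plays its essential role.
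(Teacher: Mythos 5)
Your argument follows the same skeleton as the paper's: a Northcott step reduces $\Gamma_{K,B}$ to finitely many $\Kbar$-conjugacy classes, after which the remaining work is to show that over each moduli point there are only finitely many $\PGL_{n+1}(K)$-orbits of morphisms satisfying the resultant bound. The difference lies in how that second step is discharged. The paper observes that a bounded norm $\Norm_{K/\QQ}(\Rfrak_\psi)\leq B$ forces good reduction at every prime outside the finite set $\Scal_B=\{\pfrak:\Norm_{K/\QQ}(\pfrak)\leq B\}$ (Lemmas \ref{SBFinite} and \ref{GoodRed}), and then cites, as a black box, the twist-finiteness result Theorem \ref{FiniteTwists} from \cite{Stout}: the set of twists of a fixed $\vp$ having good reduction outside a fixed finite set of places is finite. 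You instead sketch a proof of that twist-finiteness input directly, by identifying $K$-orbits over a fixed $\Kbar$-class with classes in $H^1(\Gal(\Kbar/K),\Aut(\vp))$, arguing that bounded resultant forces the associated cocycle to be unramified outside a finite set via a dynamical N\'eron--Ogg--Shafarevich criterion, and then invoking Hermite--Minkowski together with finiteness of $\Aut(\vp)$. This is, in outline, exactly how the cited theorem is established, so you have effectively reconstructed the paper's key external input rather than taken a different route. The one caveat is that the local unramifiedness step --- lifting a conjugation between the reductions of $\vp$ and $\psi$ over the residue field to one in $\PGL_{n+1}(K_\pfrak^{\mathrm{ur}})$, after adjusting by an element of $\Aut(\vp)(\Kbar)$ --- is left as a plan rather than a proof, and it is precisely where the real technical content of \cite{Stout} lives. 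You correctly flag this as the crux and the tools you propose (\'etaleness of $\Aut(\vp)$, integral models, Hensel lifting) are the right ones, so what you have is a faithful high-level reconstruction of the paper's argument with its hardest lemma sketched rather than proven.
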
 
The minimal resultant ideal is a way of encoding information about the primes of bad reduction for $\vp$ across its conjugacy class. 
For every non-archimedean place $v$ for which $\Rfrak_\vp$ has positive valuation, some conjugate $\vp^f$ for $f\in\PGL_{n+1}(K)$ has bad reduction.  

One can consider the height $h(\langle\vp\rangle)$ as a measure of the arithmetic complexity of the conjugacy class of $\vp$ and the norm of the resultant as a measure of the amount of bad reduction.  As we will see in Section \ref{Bounded}, bounding the norm of the minimal resultant ideal bounds the primes for which $\vp$ has bad reduction.  

Bounding both the height of the conjugacy class and the norm of the minimal resultant ideal are needed to obtain a finiteness result.  If only the norm $\Norm_{K/\QQ}(\Rfrak_\vp)$ is bounded, then one can still find infinitely many distinct $\PGL_{n+1}(K)$-conjugacy classes of endomorphisms defined over $K$. This can be accomplished, for example, by considering monic polynomials defined over $\Ocal_K$. Monic polynomials have everywhere good reduction and hence their minimal resultant ideal is the unit ideal, which has norm $1$. It is easy to show that there are infinitely many distinct $\PGL_{n+1}(\Kbar)$-conjugacy classes of such maps, and therefore infinitely many distinct $\PGL_{n+1}(K)$-conjugacy classes.  

Conversely, if one bounds only the height $h(\langle\vp\rangle)$ one gets a finite set of points in $\Mdn(K)$. 
Points in $\Mdn(K)$ are the same as $\PGL_{n+1}(\Kbar)$-conjugacy classes of endomorphisms defined over $K$; 
denote these classes by $[\vp_1],\ldots, [\vp_r]$ for endomorphisms $\vp_i$ of degree $d$ on $\PP^n$ defined over $K$.  
Each $\PGL_{n+1}(K)$-conjugacy class  which descends to some $[\vp_i]$ is called a twist.  
A priori, it is not obvious how many twists over $K$ descend to each $[\vp_i]$. 
Indeed, if the automorphism group of $\vp$, which consists of the finite subgroup of $\PGL_{n+1} \left( \bar{K} \right)$ that fixes $\vp$ under conjugation, is trivial, then there is only one twist. This need not be the case in general. For an example where there are infinitely many twists one can consider quadratic rational maps on $\PP^1$ of the following form
\begin{equation*}
\vp_b(z)=z+\dfrac{b}{z}
\end{equation*}
where $b\in K^*$.  All $\vp_b$ are $\Kbar$-isomorphic and therefore descend to the point $[\vp_1]\in\Mcal_2$. However, $\vp_b$ and $\vp_c$ are $K$-isomorphic if and only if $b/c$ is a square in $K$.  If we denote the set of twists of $\vp_1$ by $\Tw(\vp_1/K)$, then this gives an injective map $K^*/K^{*2}\rightarrow\Tw(\vp_1/K)$ by $b\mapsto [\vp_b]_K$. See example 4.71 in \cite{SilvermanADS} for more details.  For a number field $K^*/K^{*2}$ is infinite (for example, over $\QQ$ this set contains all the primes), so it follows that bounding the height $h(\langle\vp\rangle)$ is not enough to guarantee a finiteness result.

\emph{Acknowledgements}   
Both authors would like to thank Tom Tucker for reading this paper and his helpful comments, and the referee for several helpful comments.

\section{Preliminaries}\label{Prelims}
\subsection{The spaces $\Hom$ and $\Mdn$}

Let $K$ be a number field and $\Kbar$ be a fixed algebraic closure.  By assumption, varieties and morphisms will be defined over $\Kbar$. When we wish to emphasize a special field of definition, we will use the notation $\PP^n(K)$.
Let $$\Homdn = \left\lbrace \vp:\Pn\rightarrow\Pn\text{ is a degree } d \text{ morphism} \right\rbrace.$$ 
By degree we mean algebraic degree. We will see that $\Hom$ is a variety over $\Kbar$ and that $K$-rational points correspond to morphisms of degree $d$ on $\Pn$ defined over $K$. Equivalently, points of $\Hom(K)$ correspond to endomorphism $\vp:\PP^n\rightarrow\PP^n$ of algebraic degree $d$ defined over $K$.

After fixing a basis $X_0,\ldots ,X_n$ of $\Pn$, any morphism $\vp \in \Homdn$ can be written as $\vp = \left[ \vp_0, \dots, \vp_n \right]$, where each $\vp_i$ is a degree $d$ homogeneous polynomial in the $X_i$, and $\vp_0, \dots, \vp_n$ have no non-trivial common zeros over the algebraic closure $\Kbar$.
Each $\vp_i$ can be written as \begin{equation*}\vp_i = \displaystyle\sum_I a_I X^I\end{equation*} with multi-index $I$. Here, $I=(i_0,\ldots,i_n)$ and $i_0+\cdots +i_n=d$ and $X^I=X^{i_0}_0\cdots X^{i_n}_n$. We order these monomials using the lexographic ordering.
Thus, if $N = \left({n+d \choose d} \right) \left( n+1 \right) - 1$ is the number of monomials $X^I$ then we can identify any $\vp \in \Homdn$ with a point in $\PP^N$ via the association
$\vp \mapsto \left[a_I\right] \in \PP^N$. It follows that $\Hom(K)\subset\PP^N(K)$.

\begin{thm}[Theorem 1.8 in \cite{SilvermanBarbados}] \label{Silverman1.8}
There exists a geometrically irreducible polynomial $\Res \in \ZZ\left[a_i\right]$ in the coefficients of $\vp$ such that
$$ \vp \in \Homdn \Leftrightarrow \Res \left( \vp \right) \neq 0.$$
\end{thm}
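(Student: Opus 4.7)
The plan is to construct $\Res$ as the multihomogeneous (Macaulay) resultant via classical elimination theory. I introduce the incidence variety
\[
Z \;=\; \bigl\{(\vp,[x]) \in \PP^N \times \PP^n : \vp_0(x) = \cdots = \vp_n(x) = 0\bigr\},
\]
which is closed in the product since each condition $\vp_i(x)=0$ is bihomogeneous in the coefficients $(a_I)$ and the variables $(X_0,\ldots,X_n)$. Let $\pi_1:Z\to\PP^N$ and $\pi_2:Z\to\PP^n$ denote the two projections. Because $\PP^n$ is complete, $\pi_1$ is a closed map, so $\pi_1(Z)$ is a closed subvariety of $\PP^N$ whose complement is precisely $\Homdn$. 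It therefore suffices to show that $\pi_1(Z)$ is an irreducible hypersurface and that its defining polynomial may be taken in $\ZZ[a_I]$.

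I would establish irreducibility and the correct dimension by exploiting $\pi_2$. Its fiber over $[x]\in\PP^n$ is the linear subspace of $\PP^N$ cut out by the $n+1$ independent linear conditions $\vp_i(x)=0$, hence has codimension $n+1$. Thus $Z$ is a $\PP^{N-n-1}$-bundle over $\PP^n$; in particular it is irreducible of dimension $n+(N-n-1)=N-1$, and $\pi_1(Z)$ is irreducible of dimension at most $N-1$. To force the dimension to equal $N-1$, I exhibit a single element $\vp\in\pi_1(Z)$ whose common zero set is finite --- for instance the tuple with $\vp_i=X_i^d$ for $0\le i\le n-1$ and $\vp_n=0$, whose only common zero is $[0{:}\cdots{:}0{:}1]$. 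By upper semicontinuity of fiber dimension the generic fiber of $\pi_1$ is then zero-dimensional, so $\dim\pi_1(Z)=N-1$ and $\pi_1(Z)$ is an irreducible hypersurface. Because the homogeneous coordinate ring $\Kbar[a_I]$ is a UFD, such a hypersurface is the vanishing locus of a single irreducible homogeneous polynomial, giving the desired $\Res$ over $\Kbar$.

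For the integral structure I would run the construction over $\Spec\ZZ$: the incidence scheme, its projection, and hence the scheme-theoretic image are all defined over $\ZZ$, and the corresponding ideal in $\ZZ[a_I]$ is principal since $\ZZ[a_I]$ is a UFD. Clearing denominators yields a primitive generator in $\ZZ[a_I]$, unique up to sign, which we take as $\Res$. The main obstacle is confirming geometric irreducibility, i.e.\ that $\Res$ remains irreducible after base change to $\Kbar$. This amounts to observing that the argument producing $\pi_1(Z)$ as an irreducible variety depends only on formal properties of the incidence correspondence and not on the field of definition, so it applies uniformly over every geometric point of $\Spec\ZZ$; an alternative route is the classical Macaulay determinantal formula, which presents $\Res$ explicitly as a quotient of determinants with entries in $\ZZ[a_I]$ and permits direct verification of irreducibility.
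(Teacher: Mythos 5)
The paper itself offers no proof of this statement; it is imported verbatim as Theorem 1.8 of \cite{SilvermanBarbados}, with \cite{CoxLittleOshea} cited for the underlying facts about the Macaulay resultant. Your sketch is therefore a reconstruction from first principles, and it is the standard elimination-theoretic argument that those sources carry out. The structure is sound: $Z$ is a projective subbundle of the trivial $\PP^N$-bundle over $\PP^n$ (the $n+1$ linear conditions $\vp_i(x)=0$ are independent because the coefficients of distinct $\vp_i$ occupy disjoint sets of coordinates on $\PP^N$), so $Z$ is irreducible of dimension $N-1$; the point with $\vp_i=X_i^d$, $\vp_n=0$ has a single common zero, so upper semicontinuity of fiber dimension forces $\dim\pi_1(Z)=N-1$, and an irreducible codimension-one closed subvariety of $\PP^N$ is the zero locus of a single irreducible form because the homogeneous coordinate ring is a UFD. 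The one place where you gesture rather than argue is the descent to $\ZZ$ together with geometric irreducibility: ``applies uniformly over every geometric point of $\Spec\ZZ$'' does not by itself rule out the primitive integral generator acquiring a multiplicity or splitting into conjugate factors. A crisper route: run the dimension count over $\Kbar$ to get an irreducible hypersurface $V\subset\PP^N_{\Kbar}$; since $Z$ and $\pi_1$ are defined over $\QQ$ and formation of the image commutes with base change, $V$ is stable under $\Gal(\Kbar/\QQ)$, so its ideal is generated by a form $\Res\in\QQ[a_I]$ that is irreducible over $\QQ$; irreducibility of $V$ over $\Kbar$ then excludes both a nontrivial power and a factorization into distinct Galois-conjugate pieces, so $\Res$ is already irreducible in $\Kbar[a_I]$, and Gauss's lemma yields the primitive model in $\ZZ[a_I]$. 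Your alternative appeal to the Macaulay determinant quotient does produce $\ZZ$-coefficients directly, but extracting irreducibility from that formula is not easier than the elimination argument, so it is not really a shortcut.
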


The polynomial $\Res$ is called the Macaulay resultant of $\vp$. We remark that we abuse notation above.  Let $\Phi$ be an choice of the coordinates of the projective point defined by $\phi$.  If $\Psi$ is another choice, then $\Phi=\lambda\Psi$ some $\lambda\in\Kbar$. It follows from elementary properties of Macaulay resultant that $\Res(\Phi)=\lambda^{(n+1)d^n}\Res(\Psi)$.  Therefore, only the vanishing of $\Res$ on $\PP^N$ is well defined. It is multi-homogeneous in the coefficients of the $\vp_i$. This implies that $\Homdn$ is an affine variety. Specifically, $\Hom=\PP^N-V(\Res)$. For further discussion and proof of these facts regarding the Macaulay resultant see \cite{CoxLittleOshea}.

The automorphism group $\Aut \left( \Pn \right) = \PGL_{n+1}\left( \bar{K} \right)$ acts on $\Homdn$, and on $\Ratdn$ (the set of rational functions from $\Pn$ to itself) 
via the action of conjugation, indeed it acts on the entire space $\PP^N$; for $\vp \in \Homdn$ and $f \in \PGL_{n+1}\left( \bar{K} \right)$, $\vp^f = f^{-1}\circ \vp \circ f$. 
It follows from the properties of the resultant that $\vp\in\Hom\Leftrightarrow\vp^f\in\Hom$, so $\Hom$ is a $\PGL_{n+1} \left( \bar{K} \right)$-stable subset for the action of this group on $\PP^N$. 
We then define the moduli space $\Mdn$ to be the set of all conjugacy classes of endomorphisms of $\Pn$, that is
$$\Mdn = \Homdn / \PGL_{n+1} \left( \bar{K} \right).$$ 
For any $\vp \in \Homdn$ we denote its image in $\Mdn$ by $\left< \vp \right>.$ 
It follows from Geometric Invariant Theory that $\Mdn$ exists as an affine variety (see  \text{section 2.3 in }\cite{SilvermanBarbados}).

By picking an appropriate $M$ the moduli space $\Mdn$ can be embedded into $\mathbb{P}^M$. 
It follows from Theorems 2.24 and 2.26 in \cite{SilvermanBarbados} that there exists a projective variety ${\Mdn}^{,ss}$, the moduli space of semi-stable dynamical systems on $\Pn$. 
The variety ${\Mdn}^{,ss}$ exists as the quotient of the semi-stable locus for the $\PGL_{n+1} \left(\bar{K}\right)$ action on $\PP^N$ and it follows that $\Mdn\subset{\Mdn}^{,ss}$ is a dense open subset. As ${\Mdn}^{,ss}$ is projective, it embeds into some $\PP^M$ as does $\Mdn$ by the inclusion $\Mdn\hookrightarrow{\Mdn}^{,ss}\hookrightarrow\PP^M$. We can therefore define a height function
$h_{\Mdn}$ on $\Mdn$ by taking the height of the image of $\left< \vp \right>$ in $\mathbb{P}^M$. We will denote this height function by $h$.

\subsection{Minimal Resultants}\label{SubSection_MinResultant}
Following \cite{SilvermanBarbados} we now use the Macaulay resultant to define the minimal resultant of a an endomorphism.

Let $R$ be a discrete valuation ring with discrete valuation $v$ and field of fractions $F$. If $\vp= \left[ \vp_1, \dots, \vp_n \right] \in \Homdn \left( F \right)$ 
then we define 
\begin{equation*}
e_v (\vp)= v \left( \Res \left( \vp \right) \right) - \left( n+1 \right) d^n \underset{0\leq i\leq n}{\min} v \left( \vp_i \right).
\end{equation*}
Where $v \left( \vp_i \right)$ is taken to be the minimal valuation of the coefficients of $\vp_i$.  We remark that $e_v(\vp)$ is well defined.  Let $\Phi,\Psi$ be affine models of $\vp$ and $\lambda\in F^\times$ such that $\Phi=\lambda\Psi$.  Then $v(\Res(\Phi))=v(\Res(\lambda\Psi)=v(\lambda^{(n+1)d^n}\Res(\Psi))=(n+1)d^nv(\lambda)+v(\Res(\Psi))$
Similarly, \begin{equation*}\underset{0\leq i\leq n}{\min}\text{ }v(\Phi_i)=\underset{0\leq i\leq n}{\min}\text{ }v(\lambda\Psi_i)=v(\lambda)+ \underset{0\leq i\leq n}{\min}\text{ }v(\Psi_i)\end{equation*} It follows that $e_v(\vp)$ is independent of choice of affine model used to compute it.

The quantity $e_v(\phi)$ is then used to define the \emph{exponent of the minimal discriminant of} $\vp$ by:
\begin{dfn}\label{MinResultantExponent}
\begin{equation*}
\varepsilon_v \left( \vp \right)  = \underset{f \in \PGL_{n+1}{\left( K \right)}}{\min} e_v \left( \vp^f \right).
\end{equation*}
\end{dfn}

A fact that will be important in our proof of Theorem \ref{MainTheorem} is the following:
\begin{prop}\label{GoodReduction}
 $\vp$ has good reduction if and only if $\varepsilon_v \left( \vp \right) = 0$.
\end{prop}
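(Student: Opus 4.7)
The plan is to combine two observations: (i) for any normalized affine model $\Phi$ of $\vp$ (one whose coefficients lie in $R$ with at least one a unit), $e_v(\vp) = v(\Res(\Phi)) \geq 0$; and (ii) for such a normalized model, $\vp$ has good reduction at $v$ if and only if $v(\Res(\Phi)) = 0$. Granting these, the proposition follows directly from the definition of $\varepsilon_v$: because each $e_v(\vp^f)\geq 0$, the minimum vanishes exactly when some conjugate $\vp^f$ admits a normalized model with $v(\Res) = 0$, which by (ii) says that $\vp$ has good reduction after a $\PGL_{n+1}(K)$-change of coordinates.

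To establish (i), begin with any affine model $\Psi$ of $\vp$, set $m = \min_i v(\Psi_i)$, choose $\lambda \in F^\times$ of valuation $-m$, and take $\Phi = \lambda\Psi$. Then $\Phi$ is normalized, and by the paper's own calculation that $e_v$ is independent of the choice of affine model we get $e_v(\vp) = v(\Res(\Phi)) - (n+1)d^n \min_i v(\Phi_i) = v(\Res(\Phi))$. Since $\Res$ is a polynomial with integer coefficients in the entries of $\Phi$ and those entries all lie in $R$, we have $\Res(\Phi) \in R$, so $e_v(\vp) \geq 0$.

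For (ii), reduction modulo the maximal ideal $\mfrak \subset R$ is well-defined on a normalized model: at least one coefficient of $\Phi$ is a unit, so the reduced tuple $\bar\Phi$ is a nonzero point of $\PP^N(k)$, where $k = R/\mfrak$ is the residue field. By definition, $\vp$ has good reduction at $v$ exactly when $\bar\Phi$ defines a degree $d$ morphism $\Pn_{k}\to\Pn_{k}$, which by Theorem \ref{Silverman1.8} applied over $k$ is equivalent to $\Res(\bar\Phi)\neq 0$. Because Macaulay's resultant is a universal polynomial with integer coefficients, evaluation commutes with reduction, $\Res(\bar\Phi) = \overline{\Res(\Phi)}$, so the condition becomes $v(\Res(\Phi)) = 0$, i.e.\ $e_v(\vp) = 0$. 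The only substantive ingredient beyond routine bookkeeping is this compatibility of Macaulay's resultant with reduction mod $v$; once it is in hand the proposition is assembled by combining (i) and (ii) with the definition of $\varepsilon_v$.
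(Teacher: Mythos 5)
Your proof is correct. The paper offers no argument of its own here — it simply cites Proposition 3.11 of Silverman's Barbados lecture notes — so your argument supplies what the paper delegates to a reference. Your route (pass to a normalized model $\Phi$ and use model-independence of $e_v$ to see $e_v(\vp)=v(\Res(\Phi))\geq 0$ since $\Res$ has integer coefficients; use that $\Res$ commutes with reduction mod $\mfrak$ together with the criterion ``$\Res\neq 0$ over the residue field iff the reduction is a degree-$d$ morphism''; then take the minimum over $\PGL_{n+1}(K)$, which is attained because each $e_v(\vp^f)$ is a non-negative integer) is the standard one and is almost certainly what the cited reference does. One small point worth making explicit: the proposition implicitly uses the conjugate-invariant notion of good reduction — that \emph{some} $\PGL_{n+1}(K)$-conjugate of $\vp$ admits a normalized model whose reduction is a degree-$d$ morphism — which the paper never spells out but which you correctly treat as the operative definition; with the narrower, model-specific notion the equivalence would fail.
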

\begin{proof}
 See proposition 3.11 in \cite{SilvermanBarbados}. 
\end{proof}

Now let $R$ be a Dedekind domain with field of fractions $K$. 
If $v$ is a discrete valuation on $R$ then we denote the prime ideal associated to $v$ by $\pfrak_v$. 

\begin{dfn}\label{MinResultant}
 For an endomorphism $\vp \in \Homdn \left( K \right)$ the \emph{minimal resultant} of $\vp$ is
 $$\mathfrak{R}_\vp = \prod_v \pfrak_v^{\varepsilon_v \left( \vp \right)}.$$
 Where the product is taken over all of the inequivalent valuations on $R$. 
\end{dfn}

It is clear that this is an ideal of $R$ as any morphism $\vp$ has only finitely many primes of bad reduction.  It follows from Proposition \ref{GoodReduction} that $\varepsilon_v(\vp)=0$ for all but finitely many $v$.  

\subsection{Twists}

Two morphisms $\vp, \psi \in \Homdn \left( K \right)$ are $\bar{K}$-isomorphic if there is an $f \in \PGL_{n+1}\left( \bar{K}\right)$ such that $\vp  = \psi^f$, that is if $\left< \vp \right> = \left< \psi \right>$.
We denote this set of $\bar{K}$-isomorphic morphisms as 
$$\left[\vp \right] = \left\lbrace \psi \in \Homdn\left(K \right) : \psi = \vp^f \text{ for some } f \in \PGL_{n+1} \left(\bar{K}\right) \right\rbrace.$$
If we restrict our automorphisms to those defined over $K$ we get a second set
$$\left[\vp \right]_K = \left\lbrace \psi \in \Homdn\left(K \right) : \psi = \vp^f \text{ for some } f \in \PGL_{n+1} \left(K\right) \right\rbrace.$$
The twists of $\vp$ are the $K$-isomorphism classes of $\vp$ which are $\bar{K}$-isomorphic.
$$\text{Twist}_K\left(\vp\right) = \left\lbrace \left[ \psi \right]_K : \psi \in \Homdn \left( K \right) \text{ and } \left[\psi \right] = \left[ \vp \right] \right\rbrace.$$   
One can interpret twists in terms of moduli spaces.  A $\Kbar$-isomorphism class for $\vp$ defined over $K$ corresponds to a $K$-rational point of $\Mdn$.  
One can also consider the quotient variety $$\mathcal{M}^n_{d,K}=\Hom(K)/\PGL_{n+1}(K)$$ and a $K$-isomorphism class $\left[\vp\right]_K$ corresponds to a $K$-rational point of this space.  
The natural inclusion $\PGL_{n+1}(K)\hookrightarrow\PGL_{n+1}(\bar{K})$ induces a map on the quotients $\mathcal{M}^n_{d,K}\rightarrow\Mdn$.  Fibers of this morphism over the $K$-rational point $\left[\vp\right]$ are twists.

In \cite{Stout}, the first author proved the following theorem which is central in our proof of \ref{MainTheorem}:

\begin{thm}\label{FiniteTwists}
Let $\vp \in \Homdn \left(K \right)$ for $d \geq 2$, and let $S$ be a finite set of places containing the archimidean places. If
$$\Vcal \left( S \right) = \left\lbrace \left[ \psi \right]_K  \in \text{Twist}_K \left( \vp \right) : \left[ \psi \right]_K \text{ has good reduction outside } S \right\rbrace,$$ 
then $\Vcal \left( S \right)$ is finite.
\end{thm}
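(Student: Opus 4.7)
The plan is to parametrize $\Tw_K(\vp)$ by Galois cohomology and then exploit the good reduction hypothesis to cut down to a finite subset via a Hermite-Minkowski style argument.

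First I would establish the standard cohomological classification of twists. Fix a separable closure $\Kbar$ with $G_K = \Gal(\Kbar/K)$, and let $\Aut(\vp)\subset \PGL_{n+1}(\Kbar)$ denote the stabilizer of $\vp$ under conjugation. Since $\vp$ has degree $d\geq 2$, a morphism $\vp\in\Homdn(\Kbar)$ has only finitely many automorphisms (otherwise the orbit of a point under the identity component of $\Aut(\vp)$ would force infinitely many preperiodic points of the same period, contradicting the fact that $\vp$ has only finitely many periodic points of each period). So $\Aut(\vp)$ is a finite group scheme over $K$. For any twist $[\psi]_K$ of $\vp$, choose $f\in\PGL_{n+1}(\Kbar)$ with $\psi=\vp^f$ and set $\xi_\psi(\sigma)=f^{-1}\cdot{}^\sigma\! f\in\Aut(\vp)$. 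This yields a well-defined bijection
\begin{equation*}
\Tw_K(\vp)\;\longleftrightarrow\;H^1\bigl(G_K,\Aut(\vp)\bigr).
\end{equation*}

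The crucial step is to show that the good reduction hypothesis forces the associated cocycles to be unramified outside an enlargement $S'\supseteq S$ of $S$. Enlarge $S$ so that it contains the places of bad reduction of $\vp$ itself, the places dividing the order of $\Aut(\vp)$, and a finite set of places controlled by the geometry of $\Aut(\vp)$ as a subscheme of $\PGL_{n+1}$. If $\psi$ has good reduction at a place $v\notin S'$, then using Proposition \ref{GoodReduction} together with the fact that good reduction is preserved by $K_v$-conjugation by elements of $\PGL_{n+1}(\Ocal_v)$, one produces an $f_v\in\PGL_{n+1}(\Kbar_v)$ with $\psi^{f_v^{-1}}\equiv \vp\pmod v$ and with $f_v$ defined over the maximal unramified extension $K_v^{\uf}$. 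Comparing $f_v$ with the global $f$ inside $\Aut(\vp)$ shows that $\xi_\psi|_{I_v}$ is trivial, where $I_v$ is the inertia group at $v$. Thus $\xi_\psi$ factors through $\Gal(K_{S'}/K)$, where $K_{S'}$ is the maximal extension of $K$ unramified outside $S'$.

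Finally, I would invoke a finiteness theorem for cohomology with finite coefficients over $S$-integers: $H^1\!\bigl(\Gal(K_{S'}/K),A\bigr)$ is finite for any finite $G_K$-module (or finite $G_K$-group scheme) $A$. The non-abelian version needed here reduces, via twisting, to the abelian case, or can be proved directly by observing that any such cocycle factors through $\Gal(L/K)$ for a finite extension $L/K$ unramified outside $S'$ of degree bounded by $|\Aut(\vp)|$; Hermite-Minkowski then gives only finitely many such $L$, and for each, only finitely many maps $\Gal(L/K)\to\Aut(\vp)$. This bounds $H^1(G_K,\Aut(\vp))_{S'\text{-unr}}$, hence $\Vcal(S)$, finitely.

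The main obstacle I expect is the second step: rigorously promoting the place-by-place good reduction of the twist $\psi$ to the statement that the cocycle $\xi_\psi$ is unramified outside $S'$. One has to be careful, because $f\in\PGL_{n+1}(\Kbar)$ need not be defined over any small extension, so one must work with a \emph{different} choice of conjugator $f_v$ at each place and then check that the cohomology class of $\xi_\psi$ (not the cocycle itself) becomes trivial on $I_v$. Controlling the discrepancy between the local $f_v$ and the global $f$ is exactly where the finiteness of $\Aut(\vp)$ and the enlargement of $S$ to include its bad places are used.
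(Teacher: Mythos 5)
The paper does not actually prove Theorem~\ref{FiniteTwists}; it is quoted from~\cite{Stout}, so there is no internal proof to compare against. That said, your outline is the standard (and almost certainly the cited reference's) route: classify $\Tw_K(\vp)$ by $H^1(G_K,\Aut(\vp))$ using the finiteness of $\Aut(\vp)$ for $d\geq 2$, show that good reduction outside $S$ forces the cocycle to be unramified outside an enlarged finite set $S'$, and conclude by the Hermite--Minkowski finiteness of unramified-outside-$S'$ cohomology with finite coefficients. As a high-level plan this is correct and complete in structure.

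Two points deserve attention. First, the step you yourself flag as the ``main obstacle'' is genuinely where all the work lives and is not yet a proof: to get the cocycle class trivial on $I_v$ you must prove that two morphisms with good reduction at $v$ which are $\Kbar_v$-conjugate are in fact conjugate over the maximal unramified extension $K_v^{\uf}$ (equivalently, that the relevant $\Aut(\vp)$-torsor extends to a torsor over $\Ocal_v$ for a finite \emph{\'etale} automorphism scheme, after enlarging $S$ to make $\Aut(\vp)$ \'etale over $\Ocal_{K,S}$). This is the dynamical analogue of the elliptic-curve fact that $\Kbar_v$-isomorphic curves of good reduction become isomorphic over an unramified extension, and it needs an actual argument, not just an appeal to Proposition~\ref{GoodReduction}. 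Second, the claim that the cocycle factors through a Galois extension ``of degree bounded by $|\Aut(\vp)|$'' is slightly off: one must also account for the index of the kernel of the $G_K$-action on $\Aut(\vp)$, so the correct bound is in terms of $|\Aut(\vp)|$ and the size of its automorphism group (and then a normal core). This does not affect finiteness, but the stated bound is inaccurate as written. With these two gaps filled, the argument would be complete.
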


\section{Bounded height and resultant}\label{Bounded}
Let $h$ denote a fixed height on $\Mdn$ corresponding to some embedding $\Mdn\hookrightarrow\PP^M$. Let $K$ denote a fixed number field; we assume all $\vp$ are defined over $K$.

\begin{lem}\label{FiniteKbarPoints}
Let $d\geq 1$ be an integer and $B\geq 1$.  The set 
\begin{equation*}\Wcal =\lbrace \langle\vp\rangle | \vp\in\Homdn(K)\text{ and }h(\langle\vp\rangle)\leq B\rbrace\subset\Mdn\end{equation*} 
is finite.
\end{lem}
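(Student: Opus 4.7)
The plan is to recognize the statement as a direct application of Northcott's finiteness theorem on $\PP^M$ after transporting the problem to the ambient projective space.

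First I would observe that if $\vp \in \Homdn(K)$, then the $\PGL_{n+1}(\Kbar)$-conjugacy class $\langle\vp\rangle$ is Galois-invariant, since every $\sigma \in \Gal(\Kbar/K)$ fixes $\vp$ and hence maps its conjugacy class to itself. Because $\Mdn$ is constructed as a GIT quotient and is defined over $K$ (in fact over $\ZZ$, via the Macaulay resultant), this Galois-invariance means $\langle\vp\rangle \in \Mdn(K)$. Under the fixed embedding $\Mdn \hookrightarrow {\Mdn}^{,ss} \hookrightarrow \PP^M$ used to define $h$, we therefore have
\begin{equation*}
\Wcal \subset \Mdn(K) \subset \PP^M(K).
\end{equation*}

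Next I would invoke Northcott's theorem on $\PP^M$: for a fixed number field $K$, a fixed integer $M$, and a fixed bound $B$, the set
\begin{equation*}
\{ P \in \PP^M(K) \,:\, h(P) \leq B \}
\end{equation*}
is finite. Since $h$ on $\Mdn$ is by definition the restriction of the standard height on $\PP^M$ to the image of $\Mdn$, the bound $h(\langle\vp\rangle) \leq B$ is exactly a height bound on the ambient projective point. Hence $\Wcal$ is contained in a finite set, so $\Wcal$ itself is finite.

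The only real point requiring care is the descent claim $\langle\vp\rangle \in \Mdn(K)$; in principle a Galois-invariant geometric point on a quotient variety need not come from a $K$-point, but here the quotient is a geometric quotient on the stable locus and the morphism $\Homdn \to \Mdn$ is defined over $K$, so $\vp \in \Homdn(K)$ produces a bona fide $K$-rational point of $\Mdn$. Everything else is just the formal statement of Northcott, so I do not anticipate any substantive obstacle.
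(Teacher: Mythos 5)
Your proof is correct and follows essentially the same route as the paper: show $\Wcal \subset \Mdn(K) \subset \PP^M(K)$ using the $K$-rationality of $\vp$ and the fact that the quotient map $\Homdn \to \Mdn$ is defined over $K$, then apply Northcott's finiteness theorem to the set of bounded height and bounded degree. Your added remark on why Galois-invariance of $\langle\vp\rangle$ genuinely yields a $K$-point (rather than merely a Galois-fixed geometric point) is a reasonable elaboration of a step the paper passes over more briefly.
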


\begin{proof}
We claim that this set is a set of bounded height and degree in $\Mdn$.  First, every $\vp\in\Wcal$ is a $K$-rational point of $\Mdn$. This follows from the assumption that $\vp$ is defined over $K$.  Therefore, it is a $K$-rational point of $\Homdn$.  It follows via the quotient map $\Homdn\rightarrow\Mdn$ that $\vp$ corresponds to a $K$-rational point of $\Mdn$.  Hence, $\Wcal$ is a set of bounded degree.  By the assumption that each $\vp\in\Wcal$ satisfies $h(\langle\vp\rangle)\leq B$ we have that $\Wcal$ is a set of bounded height.   By the theorem of Northcott (Theorem 3.7 of \cite{SilvermanADS}), such sets of bounded degree and height are finite.
\end{proof}

We remark that this does not prove our main theorem, as this shows that $\Gamma$ is contained in finitely many $\PGL_{n+1}(\Kbar)$ conjugacy classes.  We make the following definition based on the requirement that the resultant is bounded.

\begin{dfn}
Let $B\geq 1$. We define the set of primes $\Scal_B$ of $K$ in the following way
\begin{equation*}
\Scal_B=\lbrace\pfrak |\Norm_{K/\QQ}(\pfrak)\leq B\rbrace
\end{equation*}
\end{dfn}

\begin{lem}\label{SBFinite}
Let $B\geq 1$. Then the set $\Scal_B$ of primes of $K$ is finite.
\end{lem}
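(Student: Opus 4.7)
The plan is to reduce finiteness of $\Scal_B$ to the elementary fact that only finitely many rational primes lie below $B$, together with the fact that each rational prime has only finitely many primes of $\Ocal_K$ lying above it.

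First, I would observe that every prime ideal $\pfrak$ of $\Ocal_K$ lies above a unique rational prime $p \in \ZZ$, namely $p\ZZ = \pfrak \cap \ZZ$. If $f = f(\pfrak / p)$ denotes the residue degree, then $\Norm_{K/\QQ}(\pfrak) = p^f$. In particular $\Norm_{K/\QQ}(\pfrak) \geq p$, so if $\pfrak \in \Scal_B$ then $p \leq B$.

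Next, the set $\{p \in \ZZ \mid p \text{ prime}, \ p \leq B\}$ is finite, and for each such $p$ the ideal $p\Ocal_K$ has a finite factorization into prime ideals, with the number of primes appearing bounded by $[K:\QQ]$. The set $\Scal_B$ is therefore contained in the finite union
\begin{equation*}
\bigcup_{p \leq B} \{\pfrak \subset \Ocal_K : \pfrak \mid p\Ocal_K\},
\end{equation*}
which is a finite set. This proves the lemma.

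There is no real obstacle here; the statement is essentially a packaging of the standard fact that ideals of bounded norm in the ring of integers of a number field form a finite set (which itself follows because each prime above $p$ contributes a factor of at least $p$ to the norm). The only small point to be careful about is distinguishing the rational prime below $\pfrak$ from $\pfrak$ itself and using $\Norm_{K/\QQ}(\pfrak) = p^{f(\pfrak/p)}$ to conclude $p \leq \Norm_{K/\QQ}(\pfrak)$.
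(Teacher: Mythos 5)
Your proof is correct and takes essentially the same approach as the paper: reduce to the finitely many rational primes $p \leq B$ via $\Norm_{K/\QQ}(\pfrak) = p^{f(\pfrak/p)} \geq p$, then use that each $p$ has only finitely many primes of $\Ocal_K$ above it. If anything your write-up is slightly cleaner, since the paper's displayed formula for the norm is written as a product over $p \mid \pfrak$ when really there is a unique rational prime below $\pfrak$.
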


\begin{proof}
There are only finitely many primes $p$ of $\ZZ$ such that $p\leq B$.  It follows from the property of the norm that
\begin{equation*}
\Norm_{K/\QQ}(\pfrak)=\underset{p|\pfrak}{\prod}p^{[K/\pfrak:\ZZ/p]}
\end{equation*}
and the fact that only finitely many primes $\pfrak$ lie over $p$ that $\Scal_B$ is finite.
\end{proof}

\begin{lem}\label{GoodRed}
Let $\Gamma_{K,B}$ be the set from Theorem \ref{MainTheorem}. If $\vp\in\Gamma_{K,B}$, then $\vp$ has good reduction at all primes $\pfrak\not\in \Scal_B$.
\end{lem}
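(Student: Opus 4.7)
The plan is to argue by contrapositive: I will show that any prime $\pfrak$ of bad reduction for $\vp$ must lie in $\Scal_B$. So fix $\vp \in \Gamma_{K,B}$ and a prime $\pfrak$ of $\Ocal_K$ at which $\vp$ has bad reduction, with associated valuation $v = v_\pfrak$.

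By Proposition \ref{GoodReduction}, bad reduction at $v$ forces $\varepsilon_v(\vp) \geq 1$. Inspecting the definition
\begin{equation*}
\Rfrak_\vp = \prod_w \pfrak_w^{\varepsilon_w(\vp)},
\end{equation*}
this means that $\pfrak$ appears with positive exponent in $\Rfrak_\vp$, and consequently $\pfrak \mid \Rfrak_\vp$ as ideals of $\Ocal_K$. Taking norms, $\Norm_{K/\QQ}(\pfrak)$ divides $\Norm_{K/\QQ}(\Rfrak_\vp)$ (using multiplicativity of the ideal norm), and the hypothesis $\vp \in \Gamma_{K,B}$ gives $\Norm_{K/\QQ}(\Rfrak_\vp) \leq B$.

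Combining these, $\Norm_{K/\QQ}(\pfrak) \leq \Norm_{K/\QQ}(\Rfrak_\vp) \leq B$, so by definition $\pfrak \in \Scal_B$. Thus any prime outside $\Scal_B$ is a prime of good reduction for $\vp$. There is no substantive obstacle here — the lemma is essentially a repackaging of Proposition \ref{GoodReduction} together with the multiplicativity of the ideal norm, and the only thing to be careful about is to check that $\varepsilon_v(\vp) \geq 1$ implies $v_\pfrak(\Rfrak_\vp) \geq 1$, which is immediate from Definition \ref{MinResultant}.
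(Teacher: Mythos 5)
Your proof is correct and follows essentially the same route as the paper: both arguments hinge on Proposition \ref{GoodReduction} to convert bad reduction at $\pfrak$ into divisibility $\pfrak \mid \Rfrak_\vp$, then apply multiplicativity of the ideal norm together with the bound $\Norm_{K/\QQ}(\Rfrak_\vp) \leq B$. You phrase it as a contrapositive while the paper phrases it as a proof by contradiction, but the content is identical.
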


\begin{proof}
Let $\vp\in\Gamma_{K,B}$ and $\pfrak\not\in \Scal_B$. By definition of the set $\Scal_B$, we have that $\Norm_{K/\QQ}(\pfrak)>B$. By Proposition \ref{GoodReduction} $\vp$ has bad reduction at $\pfrak$ if and only if $\pfrak|\Rfrak_\vp$. If $\vp$ has bad reduction at $\pfrak$, then $\Rfrak_\vp=\pfrak\afrak$ and hence $\Norm_{K/\QQ}(\Rfrak_\vp)=\Norm_{K/\QQ}(\pfrak)\Norm_{K/\QQ}(\afrak)>B\Norm_{K/\QQ}(\afrak)>B$. This is contrary to the assumption that $\vp\in\Gamma_{K,B}$. It follows that $\vp$ has good reduction at $\pfrak$.
\end{proof}

\begin{rem}
$\Scal_B$ may contain some primes at which $\vp$ has good reduction. Consider, for example, an endomorphism defined over $\QQ$ with minimal resultant $\Rfrak_\vp=(2)^3$. Then $\Scal_B=\lbrace 2,3,5,7\rbrace$ even though $\vp$ has bad reduction only at $2$.  
\end{rem}

We now prove theorem \ref{MainTheorem}.
\begin{proof}
It follows from Lemma \ref{FiniteKbarPoints} that $\Gamma$ descends to only finitely many point in $\Mdn(\Kbar)$, i.e. that $\Wcal=\pi(\Gamma)$ is finite under the image of the quotient map $\pi:\Homdn(K)\rightarrow\Mdn(\Kbar)$.

Let $\Vcal$ be the set of $K$-conjugacy classes of maps in $\Gamma$. 
It follows that $\Vcal\subset\Mcal^n_{d,K}$ and that under $\theta:\Mcal^n_{d,K}\rightarrow\Mdn$, $\theta(\Vcal)=\Wcal$. 
By definition, the $K$-conjugacy classes of $\Vcal$ consist of all twists lying over the finitely many $\Kbar$-conjugacy classes in $\Wcal$. If $\Vcal$ is infinite, then by the pidgeon hole principle there is at least one $\bar{K}$-conjugacy class which contains infinitely many $K$-conjugacy classes. Denote this $\Kbar$-congugacy class by $[\psi]$ and the infinitely many distinct $K$-conjugacy classes by $[\psi_i]_K$ for $i=0,1,\ldots$ where $[\psi_i]=[\psi]$ for all $i$. It follows that each $[\psi_i]_K$ is a twist of $[\psi]$.

By Lemma \ref{SBFinite} the set $\Scal_B$ is finite and by Lemma \ref{GoodRed} we see that each $[\psi_i]_K$ must have good reduction outside of $\Scal_B$. Thus we have constructed an infinite set of twists $[\psi_i]_K$ of $[\psi]$ with good reduction outside of $\Scal_B$ which contradicts Theorem \ref{FiniteTwists}. This concludes the proof of the main theorem.
\end{proof}


\bibliography{BoundedHeight}
\bibliographystyle{plain}

\end{document}